\theoremstyle{plain}      
\newtheorem{thm}{Theorem}[section]     
\newtheorem{theorem}[thm]{Theorem}     
\newtheorem{cor}[thm]{Corollary}
\newtheorem{lemma}[thm]{Lemma}
\theoremstyle{definition}
\newtheorem{example}[thm]{Example}
\DeclareMathAlphabet{\doba}{U}{msb}{m}{n}
\gdef\mN{\doba{N}}
\gdef\mR{\doba{R}}
\gdef\mT{\doba{T}}
\gdef\mZ{\doba{Z}}
\def\GL{\mathrm{GL}}
\def\SL{\mathrm{SL}}
\def\di{{\rm d}}
\def\ie{\emph{i.e.} }
\def\h1{H^1_{\mathrm{dR}}(M)}
\newcommand{\definedas}{\mathrel{\raise.095ex\hbox{\rm :}\mkern-5.2mu=}}
\author{Andrei Moroianu and Mihaela Pilca}
\address{Andrei Moroianu \\ Université Paris-Saclay, CNRS,  Laboratoire de mathématiques d'Orsay, 91405, Orsay, France}
\email{andrei.moroianu@math.cnrs.fr}
\address{Mihaela Pilca\\Fakult\"at f\"ur Mathematik\\
Universit\"at Regensburg\\Universit\"atsstr. 31 
D-93040 Regensburg, Germany}
\email{mihaela.pilca@mathematik.uni-regensburg.de}
\subjclass[2010]{53A30, 53B35, 53C25, 53C29, 53C55}
\keywords{Twisted cohomology, Morse-Novikov cohomology}
\begin{document}   

\title{Closed $1$-forms and twisted cohomology}

\maketitle

\begin{center}{\it Dedicated to Paul Gauduchon on the occasion of his 75th birthday}\end{center}

\begin{abstract} We show that the first twisted cohomology group associated to closed $1$-forms on differentiable manifolds is related to certain $2$-dimensional representations of the fundamental group. In particular, we construct examples of nowhere-vanishing $1$-forms with non-trivial twisted cohomology.
\end{abstract}

\section{Introduction}

If $\theta$ is a closed $1$-form on a smooth manifold $M$, the {\em twisted differential} $\di_\theta:=\di-\theta\wedge$ maps $\Omega^k(M)$ to $\Omega^{k+1}(M)$ and satisfies $\di_\theta\circ\di_\theta=0$, thus defining the {\em twisted cohomology groups} 
$$H^k_\theta(M):=\frac{\ker(\di_\theta|_{\Omega^k(M)})}{\di_\theta(\Omega^{k-1}(M))}.$$

These groups only depend on the de Rham cohomology class of $\theta$, since the corresponding twisted differential complexes associated to cohomologous $1$-forms are canonically isomorphic. In particular, the twisted cohomology associated to an exact $1$-form is just the de Rham cohomology.

 It is well known that the twisted cohomology defined by the Lee form of Vaisman mani\-folds, and more generally by any non-zero $1$-form $\theta$ which is parallel with respect to some Riemannian metric on a compact manifold, vanishes \cite{llmp}. 
 
The twisted cohomology groups, as well as their Dolbeault and Bott-Chern counterparts, play an important role in locally conformally Kähler geometry ({\em cf}. \cite{n19} or \cite{ov}, where the twisted cohomology is called Morse-Novikov cohomology). 

Twisted cohomology was also used by A. Pajitnov \cite{p87}, who shows that if $\theta$ is a closed 1-form with non-degenerate zeros, then for large $t$ the dimension of $H^k_{t\theta}(M)$ gives a lower bound for the number of the zeros of $\theta$ of index $k$. 
This is an analog of Witten's approach to Morse theory, in the more general situation of closed 1-forms.

On the other hand, in \cite{p}, A. Pajitnov defined a different {\em twisted Novikov homology} theory associated to closed $1$-forms $\theta$ with integral cohomology class $[\theta]\in H^1(M,\mZ)$, and shows that the twisted Novikov homology vanishes whenever $[\theta]$ admits a nowhere-vanishing representative (\cite{p}, Theorem 1.3). We will see in Example \ref{ex} below that the corresponding result fails for the standard twisted cohomology theory considered here.

Our main result (Theorem \ref{pr}) relates the non-zero elements in the first twisted cohomology group associated to a closed $1$-form $\theta$ with some set of non-decomposable $2$-dimensional representations of the first fundamental group of $M$ which contain a trivial subrepresentation, and whose determinant is the character of $\pi_1(M)$ canonically associated to $\theta$.

In Section 3 we derive several applications of this result, like the vanishing of the first twisted cohomology group on manifolds with nilpotent fundamental group (Corollary \ref{nilp}), the fact that if the commutator group $[\pi_1(M),\pi_1(M)]$ is finitely generated, then the set $\{ [\theta]\in\h1\, |\, H^1_{\theta}(M)\neq 0\}$ is finite (Corollary \ref{fg}), or the non-vanishing of twisted cohomology on Riemann surfaces of genus $g\ge 2$ (Corollary \ref{rs}). In the last section we give several examples of explicit computations of the first twisted cohomology group on mapping tori or Vaisman manifolds.

{\sc Acknowledgments.} This work was supported by the Procope Project No. 57445459 (Germany) /  42513ZJ (France). We are grateful to Andrei Pajitnov and Chenji Fu for some very useful comments on earlier versions of this work.

\section{The Main Result}

Notation: the cohomology class of a $\di_\theta$-closed $1$-form $\alpha$ is denoted by $[\alpha]_\theta$.

Let us recall the following well-known result and present a proof for it, whose method will be useful in the sequel.
\begin{lemma}\label{h1dR}
	Let $M$ be a manifold. There is a bijection between 
	$$ H^1_{\mathrm{dR}}(M)\overset{1:1}{\longleftrightarrow} \{ \rho\colon \pi_1(M)\to (\mR^*_+,\times) \, |\, \rho \text{ is a representation}\}.$$
\end{lemma}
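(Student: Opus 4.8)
The plan is to establish the bijection by producing explicit maps in both directions and verifying they are mutually inverse. Given a closed $1$-form $\theta$ on $M$, I would first define the associated representation $\rho_\theta\colon\pi_1(M)\to(\mR^*_+,\times)$ geometrically. For a loop $\gamma$ based at a fixed point, set
\begin{equation*}
\rho_\theta([\gamma])\definedas\exp\left(\int_\gamma\theta\right).
\end{equation*}
Because $\theta$ is closed, Stokes' theorem guarantees that $\int_\gamma\theta$ depends only on the homotopy class of $\gamma$, so $\rho_\theta$ is well defined on $\pi_1(M)$; concatenation of loops corresponds to addition of integrals, which the exponential turns into multiplication, so $\rho_\theta$ is a homomorphism into $(\mR^*_+,\times)$. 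Moreover if $\theta=\di f$ is exact, then $\int_\gamma\theta=0$ for every loop and $\rho_\theta$ is trivial, so the assignment $[\theta]\mapsto\rho_\theta$ descends to a well-defined map on $\h1$.

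Next I would construct the inverse. Given a representation $\rho\colon\pi_1(M)\to(\mR^*_+,\times)$, composing with $\log$ yields an additive homomorphism $\pi_1(M)\to(\mR,+)$, which factors through $H_1(M,\mZ)$ since $\mR$ is abelian, and hence defines an element of $\mathrm{Hom}(H_1(M,\mZ),\mR)$. By the universal coefficient theorem together with the de Rham theorem, this group is canonically isomorphic to $\h1$, producing a cohomology class $[\theta]$ whose periods are $\int_\gamma\theta=\log\rho([\gamma])$. I would then check that the two constructions are inverse to each other: starting from $[\theta]$, passing to $\rho_\theta$ and back recovers the class with the same periods, and since a de Rham class is determined by its periods this returns $[\theta]$; the reverse round-trip is immediate from the definitions.

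As an alternative to invoking the universal coefficient and de Rham theorems, I could argue more directly that $\log\circ\rho$ defines a homomorphism $H_1(M,\mZ)\to\mR$, lift it to a class in $\h1$ by representing it as a closed $1$-form via a partition of unity or by the Hurewicz-plus-de Rham identification $\h1\cong\mathrm{Hom}(\pi_1(M),\mR)$, and verify that both maps are additive and bijective by comparing periods. The main obstacle, and the only real content beyond formal bookkeeping, is the identification of $\h1$ with $\mathrm{Hom}(\pi_1(M),\mR)$, equivalently the surjectivity half of the correspondence: every homomorphism $\pi_1(M)\to\mR$ must be realized as the period map of some closed $1$-form. This is precisely the content of the de Rham theorem for degree one combined with the Hurewicz isomorphism (the abelianization $H_1(M,\mZ)$ being the target of the period pairing), and I expect the author's proof to set up exactly this identification in a way whose method will be reused for the twisted case in Theorem \ref{pr}.
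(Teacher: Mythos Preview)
Your argument is correct, but the paper takes a different, deliberately constructive route. For the forward map the paper works on the universal cover $\pi:\widetilde M\to M$: it writes $\pi^*\theta=\di\varphi$ and defines $\rho(\gamma)=e^{c_\gamma}$ via $\gamma^*\varphi=\varphi+c_\gamma$. This is of course equivalent to your period formula, but the primitive $\varphi$ is the object that gets reused. For the inverse map, rather than invoking de Rham plus universal coefficients, the paper explicitly builds a positive $\rho$-equivariant function $g$ on $\widetilde M$ (satisfying $a^*g=\rho(a)g$) by averaging a suitable bump function $f$ over $\pi_1(M)$, namely $g=\sum_{\gamma}\rho(\gamma^{-1})\gamma^*f$; a separate lemma shows such an $f$ exists via a partition of unity. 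Then $\di(\ln g)$ descends to the desired closed $1$-form on $M$.

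Your approach is shorter and perfectly adequate for this lemma in isolation. The paper's approach buys exactly what you anticipated in your last sentence: the same averaging construction, with the scalar $\rho(\gamma^{-1})$ replaced by the $2\times 2$ matrix $M_{\gamma^{-1}}$, is the engine of the converse direction in Theorem~\ref{pr}. So the constructive proof is chosen not for elegance but because it is the template for the twisted case; the abstract identification $\h1\cong\mathrm{Hom}(\pi_1(M),\mR)$ would leave that later construction unmotivated.
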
	

\begin{proof}
	Let $\theta$ be a representative of a cohomology class $[\theta]\in H^1_{\mathrm{dR}}(M)$ and denote the universal cover of $M$ by $\pi:\widetilde M\to M$. Then the pull-back $\widetilde\theta:=\pi^*\theta$ of $\theta$ is an exact form, \ie there exists $\varphi\in\mathcal{C}^\infty(\widetilde M)$ such that $\widetilde\theta=\di\varphi$. Any element $\gamma\in\pi_1(M)$ acts trivially on $\widetilde\theta$, so $\gamma^*\di\varphi=\di\varphi$, which implies the existence of a constant $c_\gamma\in\mR$ with $\gamma^*\varphi=\varphi+c_{\gamma}$. Since $\gamma_1^*\gamma_2^*=(\gamma_2\gamma_1)^*$, we see that $\gamma\mapsto c_\gamma$ is a group morphism from $\pi_1(M)$ to $(\mR,+)$.  We then associate to $[\theta]\in H^1_{\mathrm{dR}}(M)$ the representation $\rho\colon \pi_1(M)\to (\mR^*_+,\times)$ defined by $\rho(\gamma):=e^{c_\gamma}$. The representation $\rho$ does not depend on the choice of the representative $\theta$ in its cohomology class. Indeed, if we replace $\theta$ by $\theta+\di h$, then $\varphi$ is replaced by $\varphi+\pi^*h$, and since $\pi^*h$ is invariant by $\pi_1(M)$, the constants $c_\gamma$ do not change.
	
Conversely, for any representation $\rho\colon \pi_1(M)\to (\mR^*_+,\times)$ we will construct a positive function $g$ on $\widetilde M$ which is $\rho$-equivariant, i.e. $a^*g=\rho(a)g$ for every $a\in\pi_1(M)$. To do this, let us pick a non-negative function $f$ on $\widetilde M$ satisfying the properties (i) and (ii) of Lemma \ref{function} below. We introduce the function $$g:=\displaystyle \sum_{\gamma\in\pi_1(M)}\rho(\gamma^{-1})\gamma^*f$$ which is well-defined and smooth on $\widetilde M$ since the sum is finite in the neighbourhood of any point of $\widetilde M$ by property (ii). Moreover, $g$ is a positive function on $\widetilde M$ since $f>0$ on $V$ and $\pi_1(M)\cdot V=\widetilde M$ by property (i). For any $a\in\pi_1(M)$, we have: 
	\[ a^*g=\sum_{\gamma\in\pi_1(M)}\rho(\gamma^{-1})(\gamma a)^*f=\sum_{\delta\in\pi_1(M)}\rho(a\delta^{-1})\delta^*f=\rho(a)g.\]
	
	This shows that $\widetilde\theta:=\di(\ln g)$ is an exact $1$-form on $\widetilde M$, which is $\pi_1(M)$-invariant, hence $\widetilde\theta$ descends to a closed $1$-form $\theta$ on $M$. We associate to $\rho$ the cohomology class of $\theta$ in $H^1_{\mathrm{dR}}(M)$. This does not depend on the choice of $f$. Indeed, if $g_1$ is any other positive function on $\widetilde M$ satisfying $ a^*g=\rho(a)g$ for every $a\in\pi_1(M)$, then $g_1/g$ is $\pi_1(M)$-invariant, so it is the pull-back to $\widetilde M$ of some function $h$ on $M$. Then the closed $1$-form $\theta_1$ on $M$ satisfying $\pi^*\theta_1=\di(\ln g_1)$ is $\theta_1=\theta+\di h$, so $[\theta_1]=[\theta].$
	
	One can easily check that the above defined maps are inverse to each other.

\end{proof}

\begin{lemma}\label{function} There exists a non-negative function $f\in\mathcal{C}^{\infty}(\widetilde M, \mR_+)$ satisfying the following properties:
\begin{enumerate}
\item[(i)] $f$ is positive on some open set $V\subset \widetilde M$ with $\pi_1(M)\cdot V=\widetilde M$;
\item[(ii)] any point $x\in\widetilde M$ has an open neighborhood $V_x$, such that the set 
$$\{\gamma\in \pi_1(M)\, |\, \gamma\cdot V_x \cap \mathrm{supp}(f)\neq \emptyset  \}$$ is finite.
\end{enumerate}
\end{lemma}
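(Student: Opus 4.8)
The plan is to build $f$ by lifting a suitable partition of unity from $M$ to the universal cover, exploiting two standard properties of the covering $\pi\colon\widetilde M\to M$: local triviality (every point of $M$ has an evenly covered neighbourhood, over which $\pi$ admits smooth sections onto the individual sheets) and proper discontinuity of the deck action (for any two compact sets $A,B\subset\widetilde M$, the set $\{\gamma\in\pi_1(M)\mid \gamma A\cap B\neq\emptyset\}$ is finite). Since $M$ is a manifold, it is paracompact and locally compact, which is exactly what lets me control the support of $f$.

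First I would choose a locally finite open cover $\{U_i\}_{i\in I}$ of $M$ by connected, relatively compact, evenly covered open sets, together with a subordinate smooth partition of unity $\{\chi_i\}_{i\in I}$, so that each $\mathrm{supp}(\chi_i)$ is compact and contained in $U_i$. For every $i$ I fix one sheet $\widetilde U_i$, a connected component of $\pi^{-1}(U_i)$, and let $s_i\colon U_i\to\widetilde U_i$ be the inverse of the diffeomorphism $\pi|_{\widetilde U_i}$. I then define $f_i\in\mathcal{C}^\infty(\widetilde M,\mR_+)$ to equal $\chi_i\circ\pi$ on $\widetilde U_i$ and $0$ elsewhere; this is smooth because $\mathrm{supp}(f_i)=s_i(\mathrm{supp}(\chi_i))$ is compact, hence closed in $\widetilde M$. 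Finally I set $f:=\sum_{i\in I}f_i$. A short projection argument shows that the family $\{\mathrm{supp}(f_i)\}_{i\in I}$ is locally finite in $\widetilde M$ (a sheet over a small neighbourhood of a point $x$ can meet $s_i(\mathrm{supp}(\chi_i))$ only for the finitely many $i$ with $U_i$ near $\pi(x)$), so the sum is locally finite and $f$ is a well-defined non-negative smooth function; since all $f_i\geq 0$ there is no cancellation, whence $\mathrm{supp}(f)=\bigcup_i\mathrm{supp}(f_i)$.

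To verify (i), I take $V:=\bigcup_i\{f_i>0\}$, an open set on which $f>0$. Its projection $\pi(V)=\bigcup_i\{\chi_i>0\}$ is all of $M$ because $\{\chi_i\}$ is a partition of unity. Given any $\tilde y\in\widetilde M$, there is $\tilde z\in V$ in the same fibre as $\tilde y$, hence $\gamma\cdot\tilde z=\tilde y$ for some $\gamma\in\pi_1(M)$, which proves $\pi_1(M)\cdot V=\widetilde M$.

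The crux is (ii), where the local finiteness of the cover and the proper discontinuity of the action must be combined. For $x\in\widetilde M$, I choose $V_x$ to be the sheet through $x$ over a relatively compact, evenly covered neighbourhood of $\pi(x)$ meeting only finitely many $U_i$, say $i\in J$. If $\gamma V_x\cap\mathrm{supp}(f)\neq\emptyset$, then $\gamma V_x\cap s_i(\mathrm{supp}(\chi_i))\neq\emptyset$ for some $i$; projecting to $M$ forces $\mathrm{supp}(\chi_i)\cap\pi(V_x)\neq\emptyset$, so $i\in J$. For each of the finitely many $i\in J$, proper discontinuity applied to the compact sets $\overline{V_x}$ and $s_i(\mathrm{supp}(\chi_i))$ shows that only finitely many $\gamma$ satisfy $\gamma\overline{V_x}\cap s_i(\mathrm{supp}(\chi_i))\neq\emptyset$. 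Taking the union over $i\in J$ yields the desired finiteness. I expect this last step to be the main obstacle, since it is the only place where one must simultaneously exploit the two independent finiteness mechanisms: local finiteness of the base cover to cut down the indices $i$, and proper discontinuity of the deck group to cut down the group elements $\gamma$ for each fixed $i$.
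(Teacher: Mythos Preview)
Your proof is correct and follows essentially the same approach as the paper: choose an open cover of $M$ by evenly covered sets, take a subordinate partition of unity, lift each bump function to a single chosen sheet in $\widetilde M$, and sum. The only minor difference is in verifying~(ii): you require the $U_i$ to be relatively compact and invoke proper discontinuity of the deck action on compact sets, whereas the paper takes $V_x$ to be the union of the finitely many chosen sheets $V_i$ with $i\in I_{\pi(x)}$ and argues combinatorially that for each such $i$ at most one deck transformation can send $V_x$ into $\mathrm{supp}(f_i)$.
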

\begin{proof}
Denote by $\pi:\widetilde M\to M$ the covering map and let $(U_i)_{i\in I}$ be an open cover of $M$ with contractible open sets. Since $U_i$ are simply connected, there exist open sets $V_i$ of $\widetilde M$ such that $\pi|_{V_i}:V_i\to U_i$ is a diffeomeorphism for each $i\in I$. 

Let $(\rho_i)_{i\in I}$ be a partition of unity subordinate to the open cover $(U_i)_{i\in I}$. By definition, we have $\rho_i\ge 0$, $\mathrm{supp}(\rho_i)\subset U_i$, and every point $y\in M$ has an open neighbourhood $U_y$ such that the set 
\begin{equation}\label{iy}I_y:=\{i\in I\, |\, U_y \cap \mathrm{supp}(\rho_i)\neq \emptyset  \}\end{equation} 
is finite.
We define $f_i:\widetilde M\to \mathbb{R}_+$ by $f_i|_{V_i}=\rho_i\circ\pi$ and $f_i|_{\widetilde M\setminus V_i}=0$. Clearly $f_i$ are smooth since $\mathrm{supp}(\rho_i)\subset U_i$. For every point $x\in \widetilde M$, there is only a finite number of $i\in I$ for which the open set $\pi^{-1}(U_{\pi(x)})$ meets the support of $f_i$, so the function $f:=\sum_{i\in I}f_i$ is well-defined, smooth and non-negative on $\widetilde M$. We claim that it also satisfies the properties (i) and (ii). 

Let $V:=f^{-1}(\mR^*_+)$ be the open set where $f$ is positive. For every $x\in \widetilde M$, there exists $i\in I$ such that $\rho_i(\pi(x))>0$. If $\gamma$ denotes the unique element in $\pi_1(M)$ for which $\gamma(x)\in V_i$, then $f_i(\gamma(x))>0$, so $f(\gamma(x))>0$, showing that $x\in \gamma^{-1}(V)$. Thus $\pi_1(M)\cdot V=\widetilde M$, so (i) is verified.

Let now $x\in \widetilde M$ be any point. We define 
$$V_x:=\bigcup_{i\in I_{\pi(x)}}V_i,$$
where $I_{\pi(x)}$ is the finite subset of $I$ given by \eqref{iy}. Then $\{\gamma\in \pi_1(M)\, |\, \gamma\cdot V_x \cap \mathrm{supp}(f_i)\neq \emptyset  \}$ is empty for every $i\in I\setminus I_{\pi(x)}$ and has exactly one element for every $i\in I_{\pi(x)}$. This shows that the set of $\gamma\in \pi_1(M)$ for which $\gamma\cdot V_x$ meets the support of $f$ is finite, having the same cardinal as $I_{\pi(x)}$. 

\end{proof}

\begin{theorem}\label{pr}
	Let $M$ be a manifold and let $\theta$ be some non-exact closed $1$-form on $M$. Let $\rho\colon \pi_1(M)\to (\mR_+^*,\times)$ denote the representation associated to $[\theta]\in\h1$, as in Lemma~\ref{h1dR}. Then the following assertions hold:
	\begin{enumerate}
		\item  
		If $H^1_\theta(M)\neq 0$, then there exists an indecomposable representation $\xi\colon \pi_1(M)\to \GL_2(\mR)$ with $\det\xi=\rho$, which fixes the vector $\begin{pmatrix} 1 \\ 0\end{pmatrix}\in\mR^2$.
		\item Conversely, if there exists an indecomposable representation $\xi\colon \pi_1(M)\to \GL_2(\mR)$ with $\det\xi=\rho$ and which fixes the vector $\begin{pmatrix} 1 \\ 0\end{pmatrix}\in\mR^2$, then $H^1_\theta(M)\neq 0$.
	\end{enumerate}
\end{theorem}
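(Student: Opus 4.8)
The plan is to transfer the whole problem to the universal cover $\pi\colon\widetilde M\to M$, where the twisted complex becomes the ordinary de Rham complex. By Lemma~\ref{h1dR} there is a positive function $g$ on $\widetilde M$ with $\gamma^*g=\rho(\gamma)g$ and $\widetilde\theta:=\pi^*\theta=\di(\ln g)$. A one-line computation gives $\di(g^{-1}\omega)=g^{-1}\di_{\widetilde\theta}\omega$ for every form $\omega$, so multiplication by $g^{-1}$ is an isomorphism of complexes between $(\Omega^\bullet(\widetilde M),\di_{\widetilde\theta})$ and $(\Omega^\bullet(\widetilde M),\di)$; since $\widetilde M$ is simply connected we have $H^1_{\mathrm{dR}}(\widetilde M)=0$, so every $\di_{\widetilde\theta}$-closed $1$-form on $\widetilde M$ is $\di_{\widetilde\theta}$-exact, and in degree $0$ the same computation shows that the $\di_{\widetilde\theta}$-closed functions are exactly the constant multiples of $g$. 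I will also use that $\pi^*$ commutes with $\di_{\widetilde\theta}$ because $\widetilde\theta$ is $\pi_1(M)$-invariant.

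For assertion (1), starting from a $\di_\theta$-closed non-exact representative $\alpha$ of a non-zero class, the form $\pi^*\alpha$ is $\di_{\widetilde\theta}$-closed, hence $\pi^*\alpha=\di_{\widetilde\theta}u$ for some smooth $u$ on $\widetilde M$. Invariance of $\pi^*\alpha$ forces $\di_{\widetilde\theta}(\gamma^*u-u)=0$, so $\gamma^*u-u=\lambda_\gamma\,g$ for constants $\lambda_\gamma\in\mR$, and combining $\gamma_1^*\gamma_2^*=(\gamma_2\gamma_1)^*$ with $\gamma^*g=\rho(\gamma)g$ yields the cocycle identity $\lambda_{\gamma_1\gamma_2}=\lambda_{\gamma_2}+\rho(\gamma_2)\lambda_{\gamma_1}$. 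This is precisely what makes $\xi(\gamma):=\left(\begin{smallmatrix}1&\lambda_\gamma\\0&\rho(\gamma)\end{smallmatrix}\right)$ a homomorphism into $\GL_2(\mR)$; it fixes $\left(\begin{smallmatrix}1\\0\end{smallmatrix}\right)$ and satisfies $\det\xi=\rho$. To finish I must identify indecomposability: $\alpha$ is $\di_\theta$-exact iff $u$ can be chosen $\pi_1(M)$-invariant, and since modifying $u$ within its freedom $u\mapsto u-t\,g$ changes $\lambda_\gamma$ into $\lambda_\gamma-t(\rho(\gamma)-1)$, exactness is equivalent to $\lambda_\gamma=t(\rho(\gamma)-1)$ for some fixed $t$. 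On the other hand the only line that could split off a complement to the fixed line is $\mR\left(\begin{smallmatrix}t\\1\end{smallmatrix}\right)$, which is $\xi$-invariant under exactly the same condition; as $\theta$ is non-exact, $\rho\not\equiv1$, so $1$ and $\rho$ are distinct characters and these are the only candidate invariant lines. Thus $\xi$ is decomposable iff $\alpha$ is $\di_\theta$-exact, and the non-triviality of $[\alpha]_\theta$ gives the indecomposable $\xi$ required.

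For assertion (2) I run this backwards. A representation $\xi$ fixing $\left(\begin{smallmatrix}1\\0\end{smallmatrix}\right)$ with $\det\xi=\rho$ has first column $\left(\begin{smallmatrix}1\\0\end{smallmatrix}\right)$ and, by the determinant condition, must be $\xi(\gamma)=\left(\begin{smallmatrix}1&\lambda_\gamma\\0&\rho(\gamma)\end{smallmatrix}\right)$; the homomorphism property gives the cocycle identity, and indecomposability means $\lambda_\gamma=t(\rho(\gamma)-1)$ has no solution $t$. It then remains to realize this cocycle geometrically: with $f$ as in Lemma~\ref{function}, I set $u:=-\sum_{\gamma\in\pi_1(M)}\lambda_\gamma\,\rho(\gamma^{-1})\,\gamma^*f$, which is well-defined and smooth by the local finiteness (ii), exactly as for $g$ in Lemma~\ref{h1dR}, and a computation parallel to the one there gives $\gamma^*u-u=\lambda_\gamma\,g$. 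Then $\di_{\widetilde\theta}u$ is $\pi_1(M)$-invariant, hence descends to a $\di_\theta$-closed $1$-form $\alpha$ on $M$ with $\pi^*\alpha=\di_{\widetilde\theta}u$, and by the equivalence just established $\alpha$ is not $\di_\theta$-exact because $\lambda$ is not a coboundary, so $H^1_\theta(M)\neq0$.

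The conceptual skeleton is short once the passage to the cover is in place; the two places that demand care are, first, the bookkeeping in the cocycle identity and in the chain of equivalences ``decomposable $\Leftrightarrow$ coboundary $\Leftrightarrow$ $\di_\theta$-exact'', where the variance rule $\gamma_1^*\gamma_2^*=(\gamma_2\gamma_1)^*$ reverses the order and is the likeliest source of sign errors, and second---the step I expect to be the main obstacle---the explicit construction in (2) of a smooth function $u$ on $\widetilde M$ realizing a prescribed cocycle. This last point is the analogue of the averaging argument in Lemma~\ref{h1dR} and relies essentially on property (ii) of Lemma~\ref{function} to guarantee local finiteness; checking both the smoothness of $u$ and the exact equivariance $\gamma^*u-u=\lambda_\gamma g$ is the most delicate verification.
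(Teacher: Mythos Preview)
Your proposal is correct and follows essentially the same strategy as the paper: pull everything back to the universal cover, trivialize the twisted complex via the conjugation $\di_{\widetilde\theta}=g\,\di\,g^{-1}$, and translate the problem into the equivariance of a primitive. The paper works with $h:=g^{-1}u$ (so that $e^{-\varphi}\pi^*\alpha=\di h$ and $\gamma^*h=\rho(\gamma^{-1})h+\lambda(\gamma)$), whereas you work directly with $u$ and the relation $\gamma^*u-u=\lambda_\gamma g$; these are equivalent bookkeepings, and your identification ``decomposable $\Leftrightarrow$ $\lambda_\gamma=t(\rho(\gamma)-1)$ $\Leftrightarrow$ $[\alpha]_\theta=0$'' matches the paper's computation exactly.

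For part (2) the constructions also coincide. The paper packages the averaging as a vector-valued sum
\[
\begin{pmatrix}g_1\\g_2\end{pmatrix}=\sum_{\gamma}M_{\gamma^{-1}}\,\gamma^*\begin{pmatrix}0\\f\end{pmatrix},
\]
sets $h=g_1/g_2$, and takes $\pi^*\alpha=g_2\,\di h$. Your scalar formula $u=-\sum_\gamma\lambda_\gamma\,\rho(\gamma^{-1})\,\gamma^*f$ is precisely the first component $g_1$ (up to your sign/labeling convention), and one checks $g_2\,\di h=\di_{\widetilde\theta}g_1$, so the resulting $\alpha$ is the same. Your verification $a^*u-u=\lambda_a g$ goes through using the cocycle identity and $\lambda_{a^{-1}}=-\rho(a^{-1})\lambda_a$, exactly as you anticipate. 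The only stylistic difference is that you phrase everything in the language of group $1$-cocycles and coboundaries for $\pi_1(M)$ acting on $\mR$ via $\rho$, which is a clean way to organize the argument; the paper does the same computations without naming them as such.
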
	

\begin{proof}
	$(1)$ Let $\alpha$ be a $\di_\theta$-closed $1$-form on $M$ whose twisted cohomology class $[\alpha]_\theta\in H^1_{\theta}(M)$ is non-zero: $[\alpha]_\theta\neq 0$. If $\pi\colon\widetilde M\to M$ denotes as before the universal cover map and $\varphi$ is a primitive of $\pi^*\theta$ on $\widetilde M$, then 
	\begin{equation}\label{conj}\pi^*\di_\theta=e^{\varphi}\di e^{-\varphi}\pi^*, 
	\end{equation}
	so that $\di_\theta\alpha=0$ is equivalent to $\di (e^{-\varphi}\pi^*\alpha)=0$ on $\widetilde M$. Hence, there exists a function $h\in\mathcal{C}^\infty(\widetilde M)$, such that $e^{-\varphi}\pi^*\alpha=\di h$, and thus $\gamma^*(\di h)=e^{-c_\gamma}\di h=\rho(\gamma^{-1})\di h$. Therefore, there exists for each $\gamma\in\pi_1(M)$ a constant $\lambda(\gamma)\in\mR$, such that 
	$$\gamma^*h=\rho(\gamma^{-1})h+\lambda(\gamma),$$
	which equivalently reads
	\begin{equation}\label{gammah}
	(\gamma^{-1})^*h=\rho(\gamma)h+\lambda(\gamma^{-1}), \qquad\gamma\in\pi_1(M).
	\end{equation}
	
	We claim that the map $\xi\colon \pi_1(M)\to \GL_2(\mR)$ defined by
\begin{equation}\label{xi}\xi(\gamma):=\begin{pmatrix} 1 & \lambda(\gamma^{-1})\\ 0 & \rho(\gamma)\end{pmatrix}
\end{equation}
	is a group morphism. Indeed, if $\gamma_1,\gamma_2\in\pi_1(M)$, we have by \eqref{gammah}:
	$$((\gamma_1\gamma_2)^{-1})^*h=(\gamma_1^{-1})^*(\gamma_2^{-1})^*h=(\gamma_1^{-1})^*(\rho(\gamma_2)h+\lambda(\gamma_2^{-1}))=\rho(\gamma_2)(\rho(\gamma_1)h+\lambda(\gamma_1^{-1}))+\lambda(\gamma_2^{-1}),$$
	thus showing that 
	$\rho(\gamma_1\gamma_2)=\rho(\gamma_1)\rho(\gamma_2)$ and $\lambda((\gamma_1\gamma_2)^{-1})=\rho(\gamma_2)\lambda(\gamma_1^{-1})+\lambda(\gamma_2^{-1}).$ Consequently,
	$$\xi(\gamma_1)\xi(\gamma_2)=\begin{pmatrix} 1 & \lambda(\gamma_1^{-1})\\ 0 & \rho(\gamma_1)\end{pmatrix}\begin{pmatrix} 1 & \lambda(\gamma_2^{-1})\\ 0 & \rho(\gamma_2)\end{pmatrix}=\begin{pmatrix} 1 & \rho(\gamma_2)\lambda(\gamma_1^{-1})+\lambda(\gamma_2^{-1})\\ 0 & \rho(\gamma_1) \rho(\gamma_2)\end{pmatrix}=\xi(\gamma_1\gamma_2).$$

We clearly have that $\det(\xi)=\rho$. It remains to check that $\xi$ is indecomposable. Assuming by contradiction that there exists a one-dimensional subrepresentation $V\subset \mR^2$ of $\xi$ with $V\neq \left\langle \begin{pmatrix} 1 \\ 0 \end{pmatrix}\right\rangle$, then $V$ is generated by some vector $\begin{pmatrix} c \\ 1 \end{pmatrix}\in\mR^2$. By \eqref{xi}, for each $\gamma\in\pi_1(M)$ we have 
$$\xi(\gamma)\begin{pmatrix} c \\ 1 \end{pmatrix}=\begin{pmatrix} c+ \lambda(\gamma^{-1})\\ \rho(\gamma) \end{pmatrix}.$$
Thus $V$ is preserved by $\xi$ if and only if $\lambda(\gamma^{-1})+c=\rho(\gamma)c$ for every $\gamma\in\pi_1(M)$.

 Together with \eqref{gammah} we obtain:
	$$ (\gamma^{-1})^*(h+c)=(\gamma^{-1})^*h+c=\rho(\gamma)h+\lambda(\gamma^{-1})+c=\rho(\gamma)h+\rho(\gamma)c=\rho(\gamma)(h+c).$$
	This shows that $e^{\varphi} (h+c)$ is the pull-back through $\pi$ of a function on $M$, \ie there exists $s\in\mathcal{C}^{\infty}(M)$ such that $h+c=e^{-\varphi}\pi^* s$. However, this yields:
	$$ e^{-\varphi}\pi^*\alpha=\di h=\di (h+c)=\di(e^{-\varphi}\pi^*s)=e^{-\varphi}\pi^*\di_{\theta}s,$$
whence $\alpha=\di_{\theta} s$, contradicting that $[\alpha]_\theta\neq 0$. We thus conclude that $\xi$ is indecomposable.\\
	
	$(2)$ 
	We denote by $M_\gamma$ the matrix of $\xi(\gamma)$ with respect to the standard basis $\left\{\begin{pmatrix} 1 \\ 0\end{pmatrix}, \begin{pmatrix}  0\\ 1\end{pmatrix}\right\}$, which is of the form $M_\gamma=\begin{pmatrix} 1 & \lambda(\gamma^{-1})\\ 0 & \rho(\gamma) \end{pmatrix}$.
Consider again the function $f\in\mathcal{C}^\infty(\widetilde M, \mR_+)$ given by Lemma \ref{function}, and define the function $g\colon \widetilde M\to \mR^2$ as follows: $$g=\begin{pmatrix} g_1 \\ g_2	\end{pmatrix}:=\sum_{\gamma\in\pi_1(M)} M_{\gamma^{-1}}\cdot \gamma^*\begin{pmatrix} 0 \\ f \end{pmatrix}.$$
As before, the function $g$ is well-defined and smooth, since the sum is finite in the neighbourhood of any point of $\widetilde{M}$, by property (ii) in Lemma \ref{function}. Note that the function $g_2=\displaystyle \sum_{\gamma\in\pi_1(M)} \rho(\gamma^{-1}) \gamma^*f$ is positive on $\widetilde M$, by property (i) in Lemma \ref{function}.
	We compute for any $a\in \pi_1(M)$:
	\begin{equation*}
	\begin{split}
	a^*g&=\sum_{\gamma\in\pi_1(M)} M_{\gamma^{-1}}\cdot a^*\gamma^*\begin{pmatrix} 0 \\ f \end{pmatrix}=\sum_{\gamma\in\pi_1(M)} M_{\gamma^{-1}}\cdot (\gamma a)^*\begin{pmatrix} 0 \\ f \end{pmatrix}\\
	&=\sum_{\gamma\in\pi_1(M)} M_{a\gamma^{-1}}\cdot \gamma^*\begin{pmatrix} 0 \\ f \end{pmatrix}=M_a\cdot\sum_{\gamma\in\pi_1(M)} M_{\gamma^{-1}}\cdot \gamma^*\begin{pmatrix} 0 \\ f \end{pmatrix}=M_a\cdot g.
	\end{split}
	\end{equation*}
	Thus,  for any $a\in \pi_1(M)$, we have:
	$$\begin{pmatrix}  a^*g_1\\a^* g_2\end{pmatrix}=\begin{pmatrix}  g_1+\lambda(a^{-1})g_2\\\rho(a) g_2\end{pmatrix}.$$
	Since $g_2>0$ on $\widetilde M$ and satisfies $a^* g_2=\rho(a)g_2$, for all $a\in\pi_1(M)$, we conclude as in the proof of Lemma \ref{h1dR}, that $\di(\ln g_2)$ is the pull-back of a closed $1$-form $\theta'$ on $M$ cohomologous to $\theta$. Up to changing the representative, we may assume that $\pi^*\theta=\di(\ln g_2)$.
	
	We define $h\colon \widetilde M\to \mR$, $h:=\frac{g_1}{g_2}$ and compute for every $a\in\pi_1(M)$:
	\begin{equation}\label{ah}
	\begin{split}
	a^* h=\frac{a^*g_1}{a^*g_2}=\frac{g_1+\lambda(a^{-1})g_2}{\rho(a)g_2}=\rho(a^{-1})h+\rho(a^{-1})\lambda(a^{-1}).
	\end{split}
	\end{equation}
	This shows that $a^*\di h=\rho(a^{-1})\di h$ for all $a\in\pi_1(M)$, so the $1$-form $g_2\di h$ is invariant under the action of $\pi_1(M)$. Consequently, there exists $\alpha\in \Omega^1(M)$ with $\pi^*\alpha=g_2\di h$. We now check that $\alpha$ defines a non-trivial twisted cohomology class in $H^1_\theta(M)$. Firstly, $\alpha$ is $\di_{\theta}$ closed, because
	$$\pi^*(\di_{\theta}\alpha)=e^{\varphi}\di e^{-\varphi}\pi^*\alpha=g_2\di\left( \frac{1}{g_2}\pi^*\alpha\right)=g_2\di(\di h)=0.$$
	We now assume that $[\alpha]_\theta=0$ in $H^1_\theta(M)$, \ie there exists $s\in\mathcal{C}^\infty(M)$ such that $\alpha=\di_{\theta} s$. Using \eqref{conj}, this implies
	$$g_2\di h=\pi^*\alpha=\pi^*\di_{\theta} s=g_2\di \left(\frac{1}{g_2}\pi^* s\right),$$
	hence there exists a constant $c$ such that $h=\frac{1}{g_2}\pi^* s+c$. We claim that the one-dimensional eigenspace spanned by the vector $\begin{pmatrix} c \\1 \end{pmatrix}\in\mR^2$ is invariant under $\xi$. Namely, the following equality holds for all $a\in\pi_1(M)$, according to \eqref{ah} and to the definition of $c$:
	$$\rho(a^{-1})h+\rho(a^{-1})\lambda(a^{-1})=a^*h=a^*\left(\frac{1}{g_2}\pi^* s+c\right)=\frac{\pi^*s}{\rho(a)g_2}+c=\rho(a^{-1})(h-c)+c,$$
	which implies that $c+\lambda(a^{-1})=\rho(a)c$. Hence, for any $a\in\pi_1(M)$, we have:
	\[\xi(a)\begin{pmatrix} c \\ 1\end{pmatrix}=M_a\begin{pmatrix} c \\ 1\end{pmatrix}=\begin{pmatrix} 1 & \lambda(a^{-1})\\ 0 & \rho(a)\end{pmatrix}\begin{pmatrix} c \\ 1\end{pmatrix}=\begin{pmatrix} c+\lambda(a^{-1}) \\ \rho(a)\end{pmatrix}=\rho(a)\begin{pmatrix} c \\ 1\end{pmatrix}.\]
	This contradicts the assumption that $\xi$ is indecomposable, hence we conclude that $[\alpha]_\theta\neq 0$. 
	
\end{proof}	

The indecomposability hypothesis in the above result can be equivalently stated as follows:

\begin{lemma}\label{dec} 
Let $\xi:\Gamma\to \GL_2(\mR)$ be a two-dimensional representation of a group $\Gamma$, which fixes the vector $\begin{pmatrix}1\\0\end{pmatrix}\in\mR^2$ and such that ${\rho}:=\det(\xi)$ is non-trivial. Then $\xi$ is decomposable if and only if $[\Gamma,\Gamma]\subset\ker(\xi)$.
\end{lemma}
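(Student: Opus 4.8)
The plan is to exploit the fact that, since $\xi$ fixes $\begin{pmatrix}1\\0\end{pmatrix}$ and has determinant $\rho$, every matrix $\xi(\gamma)$ is upper triangular of the form $\xi(\gamma)=\begin{pmatrix}1 & \mu(\gamma)\\ 0 & \rho(\gamma)\end{pmatrix}$ for a suitable function $\mu\colon\Gamma\to\mR$. I would first reformulate each side of the claimed equivalence as an explicit condition on $\mu$ and $\rho$.

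For the decomposability side: the line $L_0:=\left\langle\begin{pmatrix}1\\0\end{pmatrix}\right\rangle$ is always $\xi$-invariant, and if $\xi$ is decomposable then $\mR^2$ splits as a direct sum of two invariant lines, which being distinct cannot both equal $L_0$; conversely any invariant line $L\neq L_0$ yields the splitting $\mR^2=L_0\oplus L$. Hence $\xi$ is decomposable if and only if there is a $\xi$-invariant line $L\neq L_0$, and such an $L$ is necessarily spanned by a vector $\begin{pmatrix}c\\1\end{pmatrix}$. As already computed in the proof of Theorem \ref{pr}, $\langle\begin{pmatrix}c\\1\end{pmatrix}\rangle$ is $\xi$-invariant if and only if $c+\mu(\gamma)=\rho(\gamma)c$ for all $\gamma$. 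Thus $\xi$ is decomposable if and only if there exists $c\in\mR$ with $\mu(\gamma)=c(\rho(\gamma)-1)$ for all $\gamma\in\Gamma$; call this condition (D).

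For the kernel side: since $\xi(\Gamma)\cong\Gamma/\ker(\xi)$, the inclusion $[\Gamma,\Gamma]\subset\ker(\xi)$ holds if and only if $\xi(\Gamma)$ is abelian. Multiplying out two upper-triangular matrices of the above shape shows that $\xi(\gamma_1)$ and $\xi(\gamma_2)$ commute precisely when $\mu(\gamma_1)(\rho(\gamma_2)-1)=\mu(\gamma_2)(\rho(\gamma_1)-1)$, so $\xi(\Gamma)$ is abelian if and only if this identity holds for all $\gamma_1,\gamma_2\in\Gamma$; call this condition (C).

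It then remains to prove that (D) and (C) are equivalent. The implication (D) $\Rightarrow$ (C) is immediate upon substituting $\mu(\gamma)=c(\rho(\gamma)-1)$ into both sides. For (C) $\Rightarrow$ (D) — the only real content — I would fix an element $\gamma_0$ with $\rho(\gamma_0)\neq 1$, which exists precisely because $\rho$ is non-trivial, set $c:=\mu(\gamma_0)/(\rho(\gamma_0)-1)$, and specialize (C) to $\gamma_1=\gamma$, $\gamma_2=\gamma_0$ to read off $\mu(\gamma)=c(\rho(\gamma)-1)$ for every $\gamma$. I expect the main (indeed the only) subtle point to be the role of the non-triviality of $\rho$ here: if $\rho\equiv 1$ then (C) holds automatically while (D) forces $\mu\equiv 0$, so the equivalence genuinely fails, which is exactly why that hypothesis appears in the statement.
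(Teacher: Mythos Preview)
Your proof is correct and follows essentially the same strategy as the paper's: both directions hinge on choosing $\gamma_0$ with $\rho(\gamma_0)\neq 1$ and using it to pin down the second invariant line. The only difference is cosmetic---the paper phrases the converse geometrically (the eigenspaces of the diagonalizable matrix $\xi(\gamma_0)$ are preserved by anything that commutes with it), whereas you unwind the same argument in coordinates via the explicit conditions (C) and (D); your constant $c=\mu(\gamma_0)/(\rho(\gamma_0)-1)$ is precisely the one for which $\begin{pmatrix}c\\1\end{pmatrix}$ spans the $\rho(\gamma_0)$-eigenspace of $\xi(\gamma_0)$.
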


\begin{proof} If $\xi$ is decomposable, then all matrices in $\xi(\Gamma)$ are simultaneously diagonalizable, so they commute, whence $\xi([\Gamma,\Gamma])=\{\rm{I}_2\}$. 

Assume, conversely, that $[\Gamma,\Gamma]\subset\ker(\xi)$. By hypothesis, there exists some $\gamma_0\in\Gamma$ with $\rho(\gamma_0)\ne 1$. Then $\xi(\gamma_0)$ has two distinct eigenvalues, $1$ and ${\rho(\gamma_0)}$, so it has two one-dimensional eigenspaces $E_1$ and $E_2$. For every element $\gamma\in\Gamma$, $\xi(\gamma)$ commutes with $\xi(\gamma_0)$, so $\xi(\gamma)$ preserves $E_1$ and $E_2$. Thus $\xi$ is decomposable.
\end{proof}

\section{Applications}

We now derive some consequences of Theorem \ref{pr}.

\begin{cor}\label{nilp}
	Let $M$ be a manifold whose fundamental group $\pi_1(M)$ is nilpotent. Then for any non-trivial cohomology class $[\theta]\in\h1$, we have $H^1_{\theta}(M)=0$. 
\end{cor}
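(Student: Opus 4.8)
The plan is to argue by contradiction, combining Theorem~\ref{pr} with Lemma~\ref{dec}. Suppose $H^1_\theta(M)\neq 0$. Since $[\theta]\neq 0$, the associated representation $\rho$ is non-trivial by Lemma~\ref{h1dR}. Theorem~\ref{pr}(1) then produces an indecomposable representation $\xi\colon\pi_1(M)\to\GL_2(\mR)$ with $\det\xi=\rho$ fixing $\begin{pmatrix}1\\0\end{pmatrix}$. The point of invoking Lemma~\ref{dec} is that, for such a $\xi$ with non-trivial determinant, indecomposability is equivalent to $[\pi_1(M),\pi_1(M)]\not\subset\ker\xi$, i.e. to the image $G:=\xi(\pi_1(M))$ being non-abelian. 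So the statement reduces to showing that a nilpotent fundamental group cannot admit a non-abelian image of this form.

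Next I would exploit the special shape of $\xi$. Because $\xi$ fixes $\begin{pmatrix}1\\0\end{pmatrix}$, every matrix $\xi(\gamma)$ is upper triangular with a $1$ in the top-left corner, so $G$ is a subgroup of the $ax+b$-type group $B:=\left\{\begin{pmatrix}1&b\\0&s\end{pmatrix}\,:\,s\in\mR^*,\,b\in\mR\right\}$. On the other hand, $G$ is a quotient of the nilpotent group $\pi_1(M)$, hence itself nilpotent. The contradiction will come from the fact that a non-abelian subgroup of $B$ is never nilpotent.

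To establish this last fact, I would compute the lower central series of $G$ directly. A short matrix computation shows that any commutator in $B$ is a \emph{translation} $\begin{pmatrix}1&t\\0&1\end{pmatrix}$ (its determinant is $1$), and that conjugating such a translation by $\begin{pmatrix}1&a\\0&r\end{pmatrix}$ rescales it to $\begin{pmatrix}1&t/r\\0&1\end{pmatrix}$. Since $\rho$ is non-trivial there is $\gamma_0$ with $r:=\rho(\gamma_0)\neq 1$, and since $G$ is non-abelian $[G,G]$ contains a translation $\begin{pmatrix}1&t_0\\0&1\end{pmatrix}$ with $t_0\neq 0$. Taking iterated commutators with $\xi(\gamma_0)$ then shows that the $n$-th term of the lower central series contains the nonzero translation $\begin{pmatrix}1&t_0((1-r)/r)^{n-2}\\0&1\end{pmatrix}$ for every $n\ge 2$, so the series never reaches $\{\mathrm{I}_2\}$. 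This contradicts the nilpotency of $G$, whence $H^1_\theta(M)=0$.

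The conceptual heart is recognizing that the image lands in the affine group of the line, which is solvable but not nilpotent; the main (if routine) obstacle is the bookkeeping verifying that the translation part of the iterated commutators survives. For this, the scaling identity $\begin{pmatrix}1&a\\0&r\end{pmatrix}\begin{pmatrix}1&t\\0&1\end{pmatrix}\begin{pmatrix}1&a\\0&r\end{pmatrix}^{-1}=\begin{pmatrix}1&t/r\\0&1\end{pmatrix}$ together with $r\neq 1$ (so that the factor $(1-r)/r$ is nonzero) is exactly what is needed to keep each term of the lower central series nontrivial.
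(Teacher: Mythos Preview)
Your proposal is correct and follows essentially the same route as the paper. Both arguments invoke Theorem~\ref{pr} and Lemma~\ref{dec} to obtain an indecomposable upper-triangular representation with non-trivial determinant, then reach a contradiction by showing that iterated commutators of a non-trivial translation with an element of determinant $r\neq 1$ remain non-trivial translations; the paper carries this out with explicit elements $b_{i+1}:=b_i^{-1}a^{-1}b_ia$ in $\pi_1(M)$, while you phrase the same computation as the lower central series of the image $G\subset B$ never terminating, but the underlying calculation is the same.
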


\begin{proof}
	Let $[\theta]\in\h1$ with $[\theta]\neq 0$, and let $\rho\colon \pi_1(M)\to (\mR_+^*,\times)$ denote the representation associated to $[\theta]\in\h1$, given by Lemma~\ref{h1dR}.
	Applying Theorem \ref{pr}, we have to show that any representation $\xi\colon \pi_1(M)\to \GL_2(\mR)$ with $\det\xi=\rho$ and which fixes the vector $\begin{pmatrix} 1\\0 \end{pmatrix}$ is decomposable. We assume by contradiction that there exists such a representation $\xi$ which is indecomposable. 
	
	Since $[\theta]\neq 0$, we have $\rho\neq 1$, so there exists $a\in\pi_1(M)$ such that $\det(\xi(a))\neq 1$. Then $\xi(a)$ is diagonalizable, so there exists a basis of $\mR^2$, such that the matrix of $\xi(a)$ with respect to this basis is given by $M_a=\begin{pmatrix} 1 & 0 \\ 0 & \rho(a) \end{pmatrix}$. Since $\xi$ is assumed to be indecomposable, by Lemma \ref{dec}, there exists $b_0\in[\pi_1(M), \pi_1(M)]$
	with $M_{b_0}=\begin{pmatrix} 1 & \lambda(b_0^{-1})\\
	0 & \rho(b_0) \end{pmatrix}$ and $\lambda(b_0)\neq 0$. We then obtain for $b_1:=b_0^{-1}a^{-1}b_0a$:
	\begin{equation*}
	\begin{split}
	M_{b_1}&=\begin{pmatrix} 1 & -\frac{\lambda(b_0^{-1})}{\rho(b_0)}\\ 0 & \frac{1}{\rho(b_0)}\end{pmatrix}\begin{pmatrix} 1 & -\frac{\lambda(a^{-1})}{\rho(a)}\\ 0 & \frac{1}{\rho(a)}\end{pmatrix}\begin{pmatrix} 1 & \lambda(b_0^{-1})\\ 0 & \rho(b_0)\end{pmatrix}\begin{pmatrix} 1 & \lambda(a^{-1})\\ 0 & \rho(a)\end{pmatrix}\\[0.2cm]
	&=\begin{pmatrix} 1 & \lambda(b_0^{-1})(\rho(a)-1)\\ 0 & 1\end{pmatrix},
	\end{split}
	\end{equation*}
	which shows that also $\lambda(b_1^{-1})=\lambda(b_0^{-1})(\rho(a)-1)\neq 0$, because $\rho(a)\neq 1$ and $\lambda(b_0)\neq 0$. If we define for $i\in\mN$ inductively $b_{i+1}:=b_i^{-1}a_0^{-1}b_ia_0$, then $\lambda(b_i)\neq 0$, for all $i$, which contradicts the hypothesis that $\pi_1(M)$ is nilpotent.
	
\end{proof}

\begin{cor}\label{fg}
	Let $M$ be a manifold whose commutator subgroup $G:=[\pi_1(M), \pi_1(M)]$ is finitely generated. Then the set 
	$$\{ [\theta]\in\h1\, |\, H^1_{\theta}(M)\neq 0\}$$
	is finite and has at most $\displaystyle\mathrm{rank}(G)^{\mathrm{rank}(\pi_1(M))}$  elements.
\end{cor}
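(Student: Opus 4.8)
The plan is to turn the statement into a counting problem for the characters $\rho$ of Lemma~\ref{h1dR} and then bound it by an eigenvalue argument. By Lemma~\ref{h1dR} the assignment $[\theta]\mapsto\rho$ is a bijection between $\h1$ and the representations $\rho\colon\pi_1(M)\to(\mR^*_+,\times)$, so it suffices to bound the number of $\rho$ for which $H^1_\theta(M)\neq0$. The exact class $[\theta]=0$ merely reproduces the de Rham group and will be handled separately as at most one further class; for every other class $\theta$ is non-exact and Theorem~\ref{pr} applies.

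So let $\theta$ be non-exact with $H^1_\theta(M)\neq0$. By Theorem~\ref{pr}$(1)$ together with Lemma~\ref{dec}, there is a representation $\xi$ of the form \eqref{xi} with $\det\xi=\rho$ and $G:=[\pi_1(M),\pi_1(M)]\not\subset\ker\xi$. Since $\rho$ takes values in an abelian group, it is trivial on $G$, so for $b\in G$ we have $\xi(b)=\begin{pmatrix}1&\nu(b)\\0&1\end{pmatrix}$ with $\nu(b):=\lambda(b^{-1})$; multiplicativity of $\xi$ forces $\nu\colon G\to(\mR,+)$ to be a group homomorphism, and the condition $G\not\subset\ker\xi$ says precisely that $\nu\neq0$.

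The substantive step is to extract an eigenvalue constraint from the conjugation action. For $\gamma\in\pi_1(M)$ and $b\in G$, computing $\xi(\gamma b\gamma^{-1})=\xi(\gamma)\xi(b)\xi(\gamma)^{-1}$ from \eqref{xi} gives $\nu(\gamma b\gamma^{-1})=\rho(\gamma)^{-1}\nu(b)$. Letting $\pi_1(M)$ act on the real vector space $V:=\mathrm{Hom}(G,\mR)=\mathrm{Hom}(G^{\mathrm{ab}},\mR)$ by $(\gamma\cdot\nu)(b):=\nu(\gamma^{-1}b\gamma)$, this identity says exactly that the nonzero homomorphism $\nu$ is a common eigenvector, $\gamma\cdot\nu=\rho(\gamma)\nu$ for all $\gamma$. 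Note that $G$ itself acts trivially (conjugation is trivial on $G^{\mathrm{ab}}$), so the action factors through the abelian quotient $\pi_1(M)/G$. Because $G$ is generated by $\mathrm{rank}(G)$ elements, so is $G^{\mathrm{ab}}$, whence $\dim_\mR V\le\mathrm{rank}(G)$.

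Finally I would count. Fix generators $x_1,\dots,x_n$ of $\pi_1(M)$ with $n=\mathrm{rank}(\pi_1(M))$, and let $Q_i\in\GL(V)$ be the operator by which $x_i$ acts. The eigenvector relation gives $Q_i\nu=\rho(x_i)\nu$ with $\nu\neq0$, so $\rho(x_i)$ is among the at most $\dim V\le\mathrm{rank}(G)$ eigenvalues of $Q_i$. Since $\rho$ is determined by the tuple $(\rho(x_1),\dots,\rho(x_n))$, at most $\mathrm{rank}(G)^n=\mathrm{rank}(G)^{\mathrm{rank}(\pi_1(M))}$ non-exact characters are admissible; together with the directly-treated exact class this proves finiteness and the stated bound. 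I expect the only genuinely non-routine point to be the conjugation identity $\nu(\gamma b\gamma^{-1})=\rho(\gamma)^{-1}\nu(b)$ and its reading as a simultaneous-eigenvector condition; the remaining ingredients, namely $\dim_\mR\mathrm{Hom}(G,\mR)\le\mathrm{rank}(G)$ and the per-generator eigenvalue count, are then elementary. (In fact, since simultaneous eigenvectors of the commuting $Q_i$ for distinct characters are linearly independent, the non-exact admissible characters number at most $\dim V\le\mathrm{rank}(G)$, a sharper bound; this sharper estimate also secures finiteness even when $\pi_1(M)$ is not finitely generated, whereas the crude product bound recorded in the statement needs no independence argument but does presuppose a finite generating set.)
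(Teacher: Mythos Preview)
Your argument is correct and is essentially the paper's proof: both extract from the conjugation relation that each $\rho(a_j)$ is an eigenvalue of the operator induced by $a_j$ on a space of dimension at most $\mathrm{rank}(G)$ (the paper does this in coordinates, writing $a_j^{-1}b_ia_j=\prod_\ell b_\ell^{n_{ij\ell}}$ to obtain an integer matrix $N_j$ with $(N_j-\rho(a_j)\mathrm{I}_k)(x_1,\dots,x_k)^T=0$, while you phrase the same computation coordinate-free via the $\pi_1(M)$-action on $\mathrm{Hom}(G,\mR)$), and then count the possible tuples $(\rho(a_1),\dots,\rho(a_m))$. Your parenthetical sharpening to at most $\mathrm{rank}(G)$ non-exact classes, via linear independence of simultaneous eigenvectors for distinct characters of the commuting family $\{Q_i\}$, is a genuine improvement over the paper's bound and is not noted there.
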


\begin{proof}
	Let $\{a_1, \dots, a_m\}$ be a set of generators of $\pi_1(M)$ and let $\{b_1, \dots, b_k\}$ be a set of generators of $G$. Let $[\theta]\in \h1$  with $H^1_{\theta}(M)\neq 0$. Let  $\rho\colon \pi_1(M)\to (\mR_+^*,\times)$ denote the representation associated to $[\theta]\in\h1$, given by Lemma~\ref{h1dR}, and let $\xi\colon \pi_1(M)\to \GL_2(\mR)$ be a representation associated to $[\theta]$, as in Theorem \ref{pr}. 
	
	We denote by $M_i$ the matrix of $\xi(b_i)$ with respect to the standard basis of $\mR^2$. Since $b_i\in G=[\pi_1(M), \pi_1(M)]$, we have $\rho(b_i)=1$, so the matrix $M_i$ has the following form: $M_i=\begin{pmatrix} 1 & x_i\\ 0 & 1\end{pmatrix}$, for some $x_i\in\mR$. Let us remark that at least one of the numbers $x_i$ does not vanish, since otherwise the restriction of $\xi$ to $G$ would be trivial and then, by Lemma \ref{dec}, $\xi$ would be decomposable. 
	
	For any $1\leq j\leq m$ and $1\leq i\leq k$, the element $a_j^{-1}b_ia_j$ belongs to $G$. Therefore, there exist  integers $n_{ij\ell}$, for $1\leq \ell\leq k$, such that $a_j^{-1}b_ia_j=\displaystyle\prod_{\ell=1}^k b_{\ell}^{n_{ij\ell}}$. On the one hand, we compute:
	\begin{equation*}
	\begin{split}
	\xi(a_j^{-1})\xi(b_i)\xi(a_j)=\begin{pmatrix} 1 & -\frac{\lambda(a_j^{-1})}{\rho(a_j)}\\[0.2cm] 0 & \frac{1}{\rho(a_j)}\end{pmatrix}\begin{pmatrix} 1 & x_i\\ 0 & 1\end{pmatrix}\begin{pmatrix} 1 & \lambda(a_j^{-1})\\ 0 & \rho(a_j)\end{pmatrix}=\begin{pmatrix} 1 & x_i\rho(a_j)\\ 0 & 1\end{pmatrix}.
	\end{split}
	\end{equation*}
	On the other hand, we have:
	\begin{equation*}
	\begin{split}
	\xi(a_j^{-1})\xi(b_i)\xi(a_j)=\xi(a_j^{-1}b_i a_j)=\xi\left(\prod_{\ell=1}^k b_{\ell}^{n_{ij\ell}}\right)= \prod_{\ell=1}^k M_{\ell}^{n_{ij\ell}} =\begin{pmatrix} 1 & \displaystyle \sum_{\ell=1}^k n_{ij\ell}x_\ell\\[0.2cm] 0 & 1\end{pmatrix}.
	\end{split}
	\end{equation*}
	Hence,  for all $1\leq j\leq m$ and $1\leq i\leq k$, the following equality holds: $x_i\rho(a_j)=\displaystyle\sum_{\ell=1}^{k}n_{ij\ell}x_\ell$.  If for each fixed $j\in\{1, \dots, m\}$,  we define the $k\times k$-matrix with integer entries $N_j:=(n_{ij\ell})_{i,\ell}$, then the above system of equations for $j$ fixed can be equivalently written as: $$(N_j-\rho(a_j){\rm I}_k)\begin{pmatrix} x_1 \\
	\vdots\\
	x_k \end{pmatrix}=0.$$ 
	
	As previously noticed, at least one of the $x_i$'s is non-zero. Thus $\rho(a_j)$ must be an eigenvalue of $N_j$, so each $\rho(a_j)$ can take at most $k$ different values. Therefore, when $j$ varies, there are overall at most $k^m$ different possibilities for defining $\rho$, or, equivalently, for defining a cohomology class $[\theta]\in \h1$ with $H^1_{\theta}(M)\neq 0$.
	
\end{proof}	

\begin{cor}\label{rs} If $S$ is a compact Riemann surface of genus $g\ge 2$, then $H^1_\theta(S)\ne 0$ for every closed $1$-form $\theta$ on $S$. 
\end{cor}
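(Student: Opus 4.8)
The plan is to reduce everything to Theorem~\ref{pr} and then, for every non-exact $\theta$, produce an explicit indecomposable representation. First, if $\theta$ is exact, then $H^1_\theta(S)$ coincides with the de Rham cohomology $H^1_{\mathrm{dR}}(S)$, which has dimension $2g>0$, so there is nothing to prove. Assume therefore that $\theta$ is non-exact, so that the associated representation $\rho\colon\pi_1(S)\to(\mR^*_+,\times)$ from Lemma~\ref{h1dR} is non-trivial. By Theorem~\ref{pr}(2), it then suffices to exhibit a single indecomposable $\xi\colon\pi_1(S)\to\GL_2(\mR)$ fixing $\begin{pmatrix}1\\0\end{pmatrix}$ and with $\det\xi=\rho$.

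I would use the standard presentation $\pi_1(S)=\langle a_1,b_1,\dots,a_g,b_g\mid \prod_{i=1}^g[a_i,b_i]\rangle$. Any $\xi$ of the required shape must have the form $\xi(\gamma)=\begin{pmatrix}1&\mu(\gamma)\\0&\rho(\gamma)\end{pmatrix}$, and the requirement that $\xi$ be a morphism forces $\mu$ to satisfy the cocycle identity $\mu(\gamma_1\gamma_2)=\mu(\gamma_1)\rho(\gamma_2)+\mu(\gamma_2)$, exactly as in the computation carried out in the proof of Theorem~\ref{pr}(1). Conversely, I would \emph{define} $\xi$ by assigning arbitrary real values $\mu(a_i)=\alpha_i$ and $\mu(b_i)=\beta_i$ to the generators and extending by the cocycle rule; since $\pi_1(S)$ is a one-relator group, the only obstruction to $\xi$ being well defined is that the single relator $\prod_{i=1}^g[a_i,b_i]$ maps to the identity matrix.

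The key computation is that, because $\rho\equiv 1$ on the commutator subgroup, the diagonal part of $\xi$ on each commutator $[a_i,b_i]$ is automatically $\begin{pmatrix}1&0\\0&1\end{pmatrix}$ in its diagonal entries, and evaluating the cocycle on the relator reduces to $\sum_{i=1}^g\mu([a_i,b_i])$, where each $\mu([a_i,b_i])$ is an explicit \emph{linear} function of $(\alpha_i,\beta_i)$. Thus the relator imposes a single linear equation on $(\alpha_1,\beta_1,\dots,\alpha_g,\beta_g)\in\mR^{2g}$; since $\rho\neq 1$ this equation is non-trivial, so the space of admissible cocycles has dimension $2g-1$. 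On the other hand, by Lemma~\ref{dec} such a $\xi$ is decomposable precisely when it admits a second invariant line, necessarily spanned by some $\begin{pmatrix}c\\1\end{pmatrix}$, which translates into $\mu(\gamma)=c(\rho(\gamma)-1)$; these coboundaries form a one-dimensional subspace (again using $\rho\neq1$). Since $2g-1>1$ whenever $g\ge 2$, there exists an admissible $\mu$ not of this form, giving an indecomposable $\xi$ and hence $H^1_\theta(S)\neq 0$.

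The main obstacle I expect is the bookkeeping in this key computation: verifying that the diagonal of $\xi$ on each commutator is forced to be the identity and that the off-diagonal entry $\mu([a_i,b_i])$ depends linearly, with coefficients that do not all vanish when $\rho\neq1$, on $\alpha_i$ and $\beta_i$, so that the relator truly contributes only one scalar constraint. Everything else is a dimension count. An alternative, less hands-on route would be to identify $H^1_\theta(S)$ with the cohomology of $S$ in the flat real line bundle of monodromy $\rho$ and to compute its dimension as $-\chi(S)=2g-2$ using the Euler characteristic together with the vanishing of $H^0_\theta(S)$ and $H^2_\theta(S)$ for non-trivial $\rho$; but the representation-theoretic count above stays closer to the framework developed in this paper.
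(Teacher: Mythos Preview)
Your proposal is correct and follows essentially the same route as the paper: both arguments assign upper-triangular matrices $\begin{pmatrix}1&x_i\\0&\rho(\gamma_i)\end{pmatrix}$ to the standard generators, compute that the single relator $\prod[a_i,b_i]=1$ imposes one linear condition on the $x_i$, and then observe that for $g\ge 2$ there is room to choose the $x_i$ so that $\xi$ is non-trivial on the commutator subgroup. The paper carries out the commutator computation explicitly and phrases the conclusion as ``one can choose the $x_i$ such that \eqref{c1} and \eqref{c2} hold,'' whereas you package the same thing as a dimension count $2g-1>1$; your added remark disposing of the exact case is a good touch that the paper leaves implicit.
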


\begin{proof} It is well known that $\pi_1(S)$ has $2g$ generators $\gamma_1,\ldots,\gamma_{2g}$ subject to the relation 
\begin{equation}\label{rel}\prod_{j=1}^g(\gamma_{2j-1}\gamma_{2j}\gamma_{2j-1}^{-1}\gamma_{2j}^{-1})=1.
\end{equation}

Any representation $\rho:\pi_1(S)\to (\mR^*_+,\times)$ is defined by the $2g$ positive real numbers $y_i:=\rho(\gamma_i)$.
According to Lemma \ref{dec} and Theorem \ref{pr}, we need to show for every such $\rho$, there exists a two-dimensional representation $\xi:\pi_1(S)\to \GL_2(\mR)$ with $\det(\xi)={\rho}$, which fixes the vector $\begin{pmatrix}1\\0\end{pmatrix}\in\mR^2$ and whose restriction to $[\pi_1(S),\pi_1(S)]$ is non-trivial. 

We look for $\xi$ of the form $\xi(\gamma_i):=\begin{pmatrix}1&x_i\\0&y_i\end{pmatrix}.$ The commutator of two such matrices is 
$$\begin{pmatrix}1&x_i\\0&y_i\end{pmatrix}\begin{pmatrix}1&x_j\\0&y_j\end{pmatrix}\begin{pmatrix}1&x_i\\0&y_i\end{pmatrix}^{-1}\begin{pmatrix}1&x_j\\0&y_j\end{pmatrix}^{-1}=\begin{pmatrix}1&x_i(y_j-1)-x_j(y_i-1)\\0&1\end{pmatrix},$$
so by \eqref{rel}, the condition that $\xi$ defines a representation reads
\begin{equation}\label{c1}\sum_{j=1}^g\left(x_{2j-1}(y_{2j}-1)-x_{2j}(y_{2j-1}-1)\right)=0.\end{equation}

Moreover, such a representation is non-trivial on $[\pi_1(S),\pi_1(S)]$ provided that 
\begin{equation}\label{c2}\exists\  i,j\in\{1,\ldots, 2g\}\ \hbox{ such that}\ x_i(y_j-1)-x_j(y_i-1)\ne 0. \end{equation}
Since $g\ge 2$, for any positive real numbers $y_i$ ($1\le i\le 2g$), one can choose the real numbers $x_i$ such that \eqref{c1} and \eqref{c2} are satisfied. 

\end{proof}

\section{Examples}

Let $f_A$ be the diffeomorphism of the torus $\mT^2=\mR^2/\mZ^2$ induced by a matrix $A\in\SL_2(\mZ)$ and let $M_A$ be the mapping torus of $f_A$. In other words, $M_A$ is the quotient of $\mT^2\times\mR$ by the free $\mZ$-action generated by the diffeomorphism $(p,t)\mapsto (f_A(p),t+1)$. The fundamental group of $M_A$ is isomorphic to the semidirect product of $\mZ$ acting on $\mZ^2$: $\pi_1(M_A)\simeq \mZ^2\rtimes_A \mZ$. 

We pick some non-zero constant $c\in\mR$ and denote by $\theta_c$ the closed form on $M_A$ whose pull-back to $\mT^2\times\mR$ is $c\,\di t$. The associated representation $\rho_c:\pi_1(M_A)\to (\mR^*_+,\times)$ maps $\mZ^2$ to $1$ and the generator of $\mZ$ to $e^c$.

\begin{lemma}\label{ec} $H^1_{\theta_c}(M_A)\ne 0$ if and only if $e^c$ is an eigenvalue of $A$.
\end{lemma}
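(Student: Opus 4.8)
The plan is to turn the statement into a purely group-theoretic question about $\pi_1(M_A)\cong\mZ^2\rtimes_A\mZ$ by invoking Theorem~\ref{pr} together with Lemma~\ref{dec}, and then to settle that question by elementary linear algebra involving the eigenvalues of $A$. Since $c\ne 0$, the representation $\rho_c$ is non-trivial, so $\theta_c$ is non-exact by Lemma~\ref{h1dR} and Theorem~\ref{pr} applies. Combining Theorem~\ref{pr} with Lemma~\ref{dec}, I would first reformulate the claim as follows: $H^1_{\theta_c}(M_A)\ne 0$ if and only if there exists a homomorphism $\xi\colon\pi_1(M_A)\to\GL_2(\mR)$ of the form $\xi(\gamma)=\left(\begin{smallmatrix}1&\mu(\gamma)\\0&\rho_c(\gamma)\end{smallmatrix}\right)$ (fixing $\left(\begin{smallmatrix}1\\0\end{smallmatrix}\right)$, with $\det\xi=\rho_c$) whose restriction to $[\pi_1(M_A),\pi_1(M_A)]$ is non-trivial.

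Next I would parametrize such $\xi$. Let $e_1,e_2$ generate the normal $\mZ^2$ and let $t$ generate the $\mZ$-factor acting by $A$. Because $\rho_c\equiv 1$ on $\mZ^2$, the map $\mu$ restricted to $\mZ^2$ is additive, hence given by a row vector $w=(\mu(e_1),\mu(e_2))\in\mR^2$, while the value $\mu(t)$ turns out to be a free parameter. The relation $e_1e_2=e_2e_1$ imposes no condition, whereas expanding the relation $\xi(te_it^{-1})=\xi(Ae_i)$ with $\rho_c(t)=e^c$ causes the $\mu(t)$-terms to cancel and leaves the single eigenvector equation $wA=e^{-c}w$.

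For the commutator condition, a direct computation gives $[\pi_1(M_A),\pi_1(M_A)]=(A-\mathrm{I}_2)\mZ^2\subset\mZ^2$, and for $v\in\mZ^2$ one finds $\mu\big((A-\mathrm{I}_2)v\big)=(e^{-c}-1)\,wv$. As $c\ne 0$ forces $e^{-c}\ne 1$, the restriction of $\xi$ to the commutator subgroup is non-trivial exactly when $w\ne 0$. Putting the two steps together, an admissible $\xi$ exists if and only if the equation $wA=e^{-c}w$ has a non-zero real solution $w$, i.e. if and only if $e^{-c}$ is an eigenvalue of $A^{T}$, equivalently of $A$. The final step is to observe that $\det A=1$ makes the two eigenvalues of $A$ reciprocal, so $e^{-c}$ is an eigenvalue precisely when $e^c$ is, which yields the stated criterion.

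The main obstacle I anticipate is bookkeeping rather than conceptual: pinning down the semidirect-product conventions (the direction of the $\mZ$-action and the left/transpose placement of $w$) so that the exponent in $wA=e^{-c}w$ comes out correctly, and verifying the identification of $[\pi_1(M_A),\pi_1(M_A)]$. Reassuringly, the final statement is insensitive to the $A$ versus $A^{-1}$ or $A$ versus $A^{T}$ ambiguity, precisely because $\det A=1$ renders the eigenvalue condition symmetric under $c\mapsto -c$.
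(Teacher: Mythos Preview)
Your proposal is correct and follows essentially the same route as the paper: both invoke Theorem~\ref{pr} together with Lemma~\ref{dec}, parametrize the upper-triangular representation on the generators of $\mZ^2\rtimes_A\mZ$, derive the eigenvector equation $wA=e^{-c}w$ (equivalently $^t\!A\,w^T=e^{-c}w^T$) from the conjugation relation, and then use $\det A=1$ to pass from $e^{-c}$ to $e^c$. Your treatment is in fact slightly more careful than the paper's in one respect: you identify $[\pi_1(M_A),\pi_1(M_A)]=(A-\mathrm{I}_2)\mZ^2$ and check directly that $\mu\big((A-\mathrm{I}_2)v\big)=(e^{-c}-1)\,wv$ is non-zero precisely when $w\ne 0$, whereas the paper asserts $[\pi_1(M_A),\pi_1(M_A)]=\mZ^2$ (which holds only when $\det(A-\mathrm{I}_2)=\pm 1$, though the argument still goes through via the eigenvalue relation).
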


\begin{proof}
If $H^1_{\theta_c}(M_A)\ne 0$, Theorem \ref{pr} shows that there exists an indecomposable representation $\xi:\pi_1(M_A)\to \GL_2(\mR)$ which fixes the vector $\begin{pmatrix}1\\0\end{pmatrix}\in\mR^2$ and such that $\det(\xi)={\rho_c}$. This means that for every $v\in \mZ^2$ there exists $\lambda(v)\in\mR$ such that $\xi(v)=\begin{pmatrix}1&\lambda(v)\\0&1\end{pmatrix}$ and if $a$ denotes the generator of the subgroup $\mZ\subset \pi_1(M_A)$, there exists $x\in\mR$ such that $\xi(a)=\begin{pmatrix}1&x\\0&e^c\end{pmatrix}$. 

The map $\lambda$ is clearly a group morphism from $\mZ^2$ to $(\mR,+)$, so 
\begin{equation}\label{la1}\lambda(v_1,v_2)=\lambda_1v_1+\lambda_2v_2,\qquad\forall v=(v_1,v_2)\in\mZ^2. 
\end{equation}

Moreover, by Lemma \ref{dec}, $\lambda$ is not identically zero since $[\pi_1(M_A),\pi_1(M_A)]=\mZ^2$.

Since $ava^{-1}=Av$, we get 
$$\begin{pmatrix}1&\lambda(Av)\\0&1\end{pmatrix}=\begin{pmatrix}1&x\\0&e^c\end{pmatrix}\begin{pmatrix}1&\lambda(v)\\0&1\end{pmatrix}\begin{pmatrix}1&x\\0&e^c\end{pmatrix}^{-1}=\begin{pmatrix}1&e^{-c}\lambda(v)\\0&1\end{pmatrix},$$
whence
\begin{equation}\label{la2}\lambda(Av)=e^{-c}\lambda(v),\qquad\forall v\in\mZ^2.
\end{equation}

By \eqref{la1}, this is equivalent to 
\begin{equation}\label{la3}^t\! A\begin{pmatrix}\lambda_1\\ \lambda_2\end{pmatrix}=e^{-c}\begin{pmatrix}\lambda_1\\ \lambda_2\end{pmatrix}.
\end{equation}

Thus $e^{-c}$ is an eigenvalue of $^t\! A$, and since the spectra of $A$ and $^t\! A$ are the same and $\det(A)=1$, it follows that $e^c$ is an eigenvalue of $A$.

Conversely, if $e^c$ is an eigenvalue of $A$, then there exists $(\lambda_1,\lambda_2)\in\mR^2\setminus\{0\}$ such that \eqref{la3} holds. Then \eqref{la2} also holds for $\lambda$ defined by \eqref{la1}. 

We can then define a representation $\xi:\pi_1(M_A)\simeq \mZ^2\rtimes_A \mZ\to \GL_2(\mR)$ by $\xi(v):=\begin{pmatrix}1&\lambda(v)\\0&1\end{pmatrix}$, for $v\in\mZ^2$ and $\xi(k):=\begin{pmatrix}1&0\\0&e^{ck}\end{pmatrix}$, for $k\in \mZ$. By Lemma \ref{dec}, this representation is indecomposable, so by Theorem \ref{pr}, we conclude that $H^1_{\theta_c}(M_A)\ne 0$.

\end{proof}

\begin{example}\label{ex}
Consider the matrix $A=\begin{pmatrix}2&1\\1&1\end{pmatrix}\in\SL_2(\mZ)$, inducing a diffeomorphism $f_A$ of $\mT^2$ and let $M_A$ denote the mapping torus of $f_A$ as before. Since 
$\frac{3+\sqrt 5}2$ is an eigenvalue of $A$, Lemma \ref{ec} shows that for $c:=\ln{\frac{3+\sqrt 5}2}$, the first twisted cohomology group associated to the nowhere vanishing 1-form $\theta_c:=c\,\di t$ on $M$ is non-zero: $H^1_{\theta_c}(M_A)\ne 0$.
\end{example}

By \cite[Theorem 4.5]{llmp}, the twisted cohomology associated to a closed 1-form which is parallel with respect to some Riemannian metric, vanishes. The above example thus shows the existence of compact manifolds carrying nowhere vanishing closed 1-forms which are not parallel with respect to any Riemannian metric.

Our last example concerns the twisted cohomology on Vaisman manifolds. Recall that a Vaisman manifold is a locally conformally Kähler manifold with parallel Lee form \cite{v}.
The space of harmonic $1$-forms on a compact Vaisman manifold $(M,g,J)$ with Lee form $\theta$ decomposes as follows:
\begin{equation}\label{decv}\mathcal{H}^1(M,g)=\mathrm{span}\{\theta\}\oplus \mathcal{H}^1_0(M,g),
\end{equation} 
where $\mathcal{H}^1_0(M,g)$  is $J$-invariant and consists of harmonic 1-forms pointwise orthogonal to $\theta$ and $J\theta$ (see for instance  \cite[Lemma 5.2]{mmp}). That means that every harmonic $1$-form on $M$ can be written as $\beta=t\theta+\alpha$, with $t\in\mR$ and $\alpha\in \mathcal{H}^1_0(M,g)$. 

By \cite[Lemma 3.3]{mmp19}, every harmonic form $\beta=t\theta+\alpha$ with $t>0$ is the Lee form of a Vaisman metric on $M$. In particular, for every non-vanishing $t$, there exists a metric on $M$ whith respect to which $\beta$ is parallel. By \cite[Theorem 4.5]{llmp}, the twisted cohomology $H^*_{t\theta+\alpha}(M)$ vanishes for all $t\ne 0$ and $\alpha\in \mathcal{H}^1_0(M,g)$. It remains to understand the case where $t=0$, {\em i.e.} the twisted cohomology associated to forms $\alpha\in \mathcal{H}^1_0(M,g)$.

It turns out that there exist Vaisman manifolds $(M,g)$ with $\mathcal{H}^1_0(M,g)\ne 0$, for which $H^*_{\alpha}(M)$ is non-zero for every $\alpha\in \mathcal{H}^1_0(M,g)\setminus \{0\}$.

\begin{example}\label{vaisman} Let $S$ be a compact oriented Riemann surface and let $\pi:N\to S$ be the principal $S^1$-bundle whose first Chern class is the positive generator $e\in H^2(S,\mZ)$. For every Riemannian metric $g_S$ on $S$, the 3-dimensional manifold $N$ carries a Riemannian metric $g_N$ making $\pi$ a Riemannian submersion, and which is Sasakian. Consequently, the Riemannian product $(M,g):=S^1\times (N,g_N)$ is Vaisman. Its Lee form is just the length element of $S^1$, denoted by $\theta=\di t$. 

The Gysin exact sequence associated to the fibration $\pi:N\to S$ reads
$$0\mapsto H_{\rm dR}^1(S)\stackrel{\pi^*}{\longrightarrow} H_{\rm dR}^1(N)\stackrel{\pi_*}{\longrightarrow}H^0_{\rm dR}(S)\stackrel{c_1(N)\wedge}{\longrightarrow} H^2_{\rm dR}(S){\longrightarrow}\cdots.$$
By the choice of $c_1(N)=e$, the last arrow is an isomorphism, thus showing that $\pi^*:H_{\rm dR}^1(S){\to} H_{\rm dR}^1(N)$ is an isomorphism too. Since $\pi:(N,g_N)\to (S,g_S)$ is a Riemannian submersion, we thus have $\pi^*(\mathcal{H}^1(S,g_S))=\mathcal{H}^1(N,g_N)$. 

Moreover, if $p_2:M=S^1\times N\to N$ denotes the projection on the second factor, we clearly have $\mathcal{H}^1(M,g)=\mathrm{span}\{\theta\}\oplus p_2^*(\mathcal{H}^1(N,g_N)).$

Denoting by $p:=\pi\circ p_2$,
the decomposition \eqref{decv} becomes
\begin{equation}\label{decv1}\mathcal{H}^1(M,g)=\mathrm{span}\{\theta\}\oplus p^*(\mathcal{H}^1(S,g_S)).
\end{equation} 

Let $\alpha$ be a non-zero harmonic form in $\mathcal{H}^1(S,g_S)$ and let $\rho\colon \pi_1(S)\to (\mR^*_+,\times)$ be the character of $\pi_1(S)$ associated to $\alpha$, given by Lemma \ref{h1dR}. Clearly, the character of $\pi_1(M)$ associated to $p^*\alpha$ is $\tilde\rho:=\rho\circ p_*$, where $p_*:\pi_1(M)\to\pi_1(S)$ is the induced morphism of the fundamental groups. Note that, since the fibers of $p:M\to S$ are connected, the exact homotopy sequence shows that $p_*$ is surjective.

By the proof of Corollary \ref{rs}, there exists a two-dimensional representation $\xi:\pi_1(S)\to \GL_2(\mR)$ with $\det(\xi)={\rho}$, which fixes the vector $\begin{pmatrix}1\\0\end{pmatrix}\in\mR^2$ and whose restriction to the commutator $[\pi_1(S),\pi_1(S)]$ is non-trivial. 

Composing $\xi$ with $p_*$ yieds a two-dimensional representation $\tilde\xi:=\xi\circ p_*:\pi_1(M)\to \GL_2(\mR)$ with $\det(\tilde\xi)={\tilde\rho}$, which fixes the vector $\begin{pmatrix}1\\0\end{pmatrix}\in\mR^2$ and whose restriction to $[\pi_1(M),\pi_1(M)]$ is non-trivial (since $p_*$ is surjective). By Theorem \ref{pr}, the first twisted cohomology group $H^1_{p^*\alpha}(M)$ is non-vanishing.

\end{example}

\end{document}